\definecolor{ududff}{rgb}{0.30196078431372547,0.30196078431372547,1}
\newtheorem{theorem}{Theorem}[section]
\newtheorem{cor}[theorem]{Corollary}
\newtheorem{definition}[theorem]{Definition}
\newtheorem{lemma}[theorem]{Lemma}
\newtheorem{result}[theorem]{Result}
\newtheorem{remark}[theorem]{Remark}
\newcommand{\wt}{\mathrm{wt}}
\newcommand{\cE}{\mathcal{E}}
\newcommand{\bF}{\mathbb{F}}
\newcommand{\AG}{\mathrm{AG}}
\newcommand{\PG}{\mathrm{PG}}
\newcommand{\F}{\mathbb{F}}
\title{Locally repairable codes with high availability based on generalised quadrangles}
\author{Michel Lavrauw \and Geertrui Van de Voorde}\thanks{This research was made possible by an Erskine Fellowship from the University of Canterbury, Christchurch, New Zealand.}\thanks{The first author acknowledges the support of {\em The Scientific and Technological Research Council of Turkey}, T\"UB\.{I}TAK (project no. 118F159).}
\begin{document}

\maketitle

{\bf {\it Abstract} Locally Repairable Codes (LRC's) based on generalised quadrangles were introduced by Pamies-Juarez, Hollmann and Oggier in  \cite{PaHoOg2013}, and bounds on the repairability and availability were derived. In this paper, we determine the values of the repairability and availability of such LRC's for a large portion of the currently known generalised quadrangles. In order to do so, we determine the minimum weight of the codes of translation generalised quadrangles and characterise the codewords of minimum weight.
}

\section{Introduction}

Locally recoverable/repairable codes (LRC's) have been designed to provide reliability with small repair traffic
in systems like cloud storage and distributed computing where high data availability is necessary \cite{lrc1,lrc2}. The dual codes of {\em partial geometries} were used to construct LRC's with high availability in \cite{PaHoOg2013} where bounds on the repairability and the availability for these codes were derived. In the same paper (see \cite[Theorem 1 and Remark 1]{PaHoOg2013}) it was shown that the maximum {\em rate} of a balanced locally repairable binary code from a partial geometry is achieved when the partial geometry is a {\em generalised quadrangle}.  
In this paper, we determine the exact values of the {\em repair degree} and the {\em repair availability} of any LRC constructed from a
classical or translation generalised quadrangle or a generalised quadrangle $T_2^*(O)$, $O$ a hyperoval (see Theorem \ref{main2}). 
In order to do so, we will study the minimum weight of the linear codes generated by the incidence matrix of generalised quadrangles, where we focus on the case of {\em translation generalised quadrangles} (see Theorem \ref{main}).

The study of minimum weight codewords in linear codes is a classical problem in coding theory. Within this topic, the codes generated by the incidence matrix of points and blocks of certain incidence structures are of special interest. The codes generated by the incidence matrix of points and subspaces of affine and projective spaces are well understood since the 1970's. 
For partial geometries, the {\em dual} of these codes have attracted attention, not only as LRC's, but also because of their properties when seen as a Low Density Parity Check code (LDPC code). The minimum weight has been determined in several cases \cite{johnsonweller,kim,pepe}. However, when the partial geometry is not fully embedded in an affine or projective space, far less is known. In this paper, we study this problem for large families of generalised quadrangles, extending the results from \cite{bagchi} for the generalised quadrangles $W(3,q)$ and $H(3,q^2)$.

\section{Preliminaries}

A {\em partial geometry} with parameters $(s,t, \alpha)$ is an incidence structure of {\em points} and {\em lines} satisfying the following properties.
\begin{enumerate}
\item Through every two distinct points there is at most one line and every two distinct lines meet in at most one point.
\item Every line contains $s+1$ points.
\item Every point lies on $t+1$ lines.
\item For every point $P$ and every line $L$, not through $P$, there are $\alpha$ lines through $P$ which intersect $L$.
\end{enumerate}

A partial geometry with $\alpha=1$ is called a {\em generalised quadrangle}. Generalised quadrangles form an important class of rank two geometries
which are fundamental in the theory of {\em buildings} developed by J. Tits.

The {\em classical finite generalised quadrangles} arise from quadratic and sesquilinear forms and are typically denoted by $Q(4,q)$, $W(3,q)$, $Q(5,q)$, $H(3,q^2)$ and $H(4,q^2)$ (see \cite{payne} for more information). However, other families of generalised quadrangles are known. A large family of examples is given by the {\em translation generalised quadrangles}. The classical generalised quadrangles $Q(4,q)$ and $Q(5,q)$ can be described as a translation generalised quadrangle (see Remark \ref{iso}). Each translation generalised quadrangle can be constructed from a certain set of subspaces in a projective space, called an {\em egg}. This correspondence will be extensively used in our proofs and is detailed below. A projective (affine) space of dimension $n$ over the finite field with $q$ elements is denoted by $\PG(n,q)$ ($\AG(n,q)$).


An {\em egg $\cE_{n,m}$} is a set of $q^m+1$ subspaces of $\PG(2n+m-1,q)$, each of dimension
$(n-1)$,  such that any three different elements of $\cE_{n,m}$ span a  $(3n-1)$-dimensional subspace, and each element $E$ of $\cE_{n,m}$ is contained in an $(n+m-1)$-dimensional subspace, $T_E$, which is disjoint from any element of $\cE_{n,m}\setminus \{E\}$. The subspace $T_E$ is called the {\em tangent space of $\cE_{n,m}$ at $E$}.
The only known examples of eggs are for parameters satisfying $m=n$ and $m=2n$.
Examples with these parameters can be constructed by applying {\em field reduction} (see \cite{LaVa2015}) to the set of points of an oval (for $m=n$) or an ovoid (for $m=2n$). The examples constructed in this way are called {\em elementary}. There exist examples of non-elementary eggs for $m=2n$, but all known examples of eggs for $m=n$ are elementary. For more information about eggs, we refer to \cite{payne,thesismichel}. A complete list of the known examples can be found in \cite[Section 3.8]{thesismichel}.


Each translation generalised quadrangle is isomorphic to the incidence structure $T(\cE)$ constructed from some egg $\cE$ in a $(2n+m-1)$-dimensional projective space $\PG(2n+m-1,q)$ over a finite field $\bF_q$, $q=p^h$, $p$ prime (see e.g. \cite[Theorem 8.7.1]{payne}). In order to construct $T(\cE)$, embed $\PG(2n+m-1,q)$ as a hyperplane $H_\infty$ in $\Pi \cong \PG(2n+m,q)$.

The points of $T(\cE)$ are of three types: 
\begin{enumerate}[(i)]
\item the points in $\Pi\setminus H_\infty$;
\item the $(n+m)$-dimensional subspaces of $\Pi$ intersecting $H_\infty$ in a tangent space of $\cE$;
\item $(\infty)$. \end{enumerate}
The lines of $T(\cE)$ are of two types: 
\begin{enumerate}[(a)]
\item the $n$-dimensional subspace of $\Pi$ intersecting $H_\infty$ in an element of $\cE$; 
\item the elements of $\cE$.
\end{enumerate} Incidence is defined as follows. The point $(\infty)$ is incident with all lines of type $(b)$ and with no line of type $(a)$. The points of type $(ii)$ are incident with the lines of type $(a)$ contained in it, and with the unique line of type $(b)$ contained in it. Points of type $(i)$ are incident with all lines of type $(a)$ containing it. 

It easily follows from the definition that the generalised quadrangle $T(\cE)$ has parameters $(s,t)=(q^n,q^m)$.
\begin{remark}\label{iso} The generalised quadrangle $T(\cE)$ is isomorphic with $Q(4,q)$ if and only if $\cE$ is an elementary egg obtained from a conic, and isomorphic to $Q(5,q)$ if $\cE$ is an elementary egg obtained from an elliptic quadric in a 3-dimensional projective space.

\end{remark}

\section{Linear codes from affine and projective spaces}
Consider the incidence matrix $G$ of points versus $t$-spaces of $\PG(n,q)$, $q=p^h$, $p$ prime, where we index the rows by $t$-spaces and the columns by points. The vector space generated by the rows of $G$ over the finite field $\F_p$ is denoted by $C_t(\PG(n,q))$. 
Similarly, the $p$-ary code generated by the incidence matrix of points versus $t$-spaces (also called {\em $t$-flats}) of $\AG(n,q)$ is denoted by $C_t(\AG(n,q))$. Note that in this paper, we are only considering the {\em $p$-ary} codes where $p$ is the prime such that $q=p^h$.

We will make use of the following classical results (see e.g. \cite[Theorem 5.7.9]{assmus}).
\begin{result}\label{classic}
(1) The minimum weight of $C_t(\PG(n,q))$ is $\frac{q^{t+1}-1}{q-1}$ and the codewords of minimum weight are the scalar multiples of the incidence vectors of $t$-spaces.
(2) The minimum weight of $C_t(\AG(n,q))$ is $q^t$ and the codewords of minimum weight are the scalar multiples of the incidence vectors of $t$-flats.
\end{result}
Result \ref{classic} (2) was established by Delsarte, Goethals and MacWilliams \cite{delsarte} by describing $C_t(\AG(n,q))$ as a {\em subfield subcode} of the generalised Reed-Muller code $\mathcal{R}_q((n-t)(q-1),n)$.
This fact 
allows us to deduce some further properties of $C_t(\AG(n,q))$.
In 2010, Rolland \cite{rolland} determined the second weight of the generalised Reed-Muller codes in almost all cases, including $\mathcal{R}_q((n-t)(q-1),n)$.
Applied to this case, he obtained the following result.
\begin{result}\cite[Theorem 3.8]{rolland}
The next-to-minimum weight of $\mathcal{R}_q((n-t)(q-1),n)$ is $2(q-1)q^{t-1}$ if $q\geq 3$.
\end{result}
For $q=2$, the next-to-minumum weight was already determined in 1974.
\begin{result}\cite{kasami}
The next-to-minimum weight of $\mathcal{R}_2(n-t,n)$ is $2^{t+1}$ if $t=1$ or $t=n-1$ and $2^t+2^{t-1}$ if $1<t<n-1$.
\end{result}

As a corollary, we see that there is a gap in the weight enumerator of $C_t(\AG(n,q))$; in particular, we find that there are no codewords of weight $q^t+1$ in this code.
\begin{cor} \label{gap} There are no codewords of weight $q^t+1$ in $C_t(\AG(n,q))$.
\end{cor}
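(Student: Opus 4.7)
The plan is to derive the corollary as a clean consequence of the two preceding Results on the second weight of the ambient generalised Reed-Muller code, together with the subfield subcode realisation of $C_t(\AG(n,q))$ that is recalled immediately before Result 2.5.

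The crucial observation I would open with is that the Hamming weight enumerator of any subfield subcode is supported on a subset of the weights occurring in the ambient code: if $c\in C_t(\AG(n,q))\subseteq \mathcal{R}_q((n-t)(q-1),n)$, then $c$ is in particular a codeword of the Reed-Muller code with the same Hamming weight. Consequently, it suffices to show that the weight $q^t+1$ does not occur in $\mathcal{R}_q((n-t)(q-1),n)$ at all.

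Next I would line up the two pieces of information the excerpt has just recorded. By Result \ref{classic}(2), together with the fact that the minimum weight of a subfield subcode is at least that of the ambient code, the minimum weight of $\mathcal{R}_q((n-t)(q-1),n)$ is exactly $q^t$. By Result 2.5 (for $q\geq 3$) the next-to-minimum weight of $\mathcal{R}_q((n-t)(q-1),n)$ is $2(q-1)q^{t-1}$; by Result 2.6 (for $q=2$) it is $2^{t+1}$ when $t\in\{1,n-1\}$ and $2^t+2^{t-1}$ otherwise. So the conclusion reduces to the elementary arithmetic verification that $q^t+1$ is strictly smaller than the next-to-minimum weight in every relevant case.

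The only non-trivial step is this last inequality check. For $q\geq 3$ one rewrites $2(q-1)q^{t-1}-(q^t+1)=q^{t-1}(q-2)-1$, which is positive as soon as either $q\geq 4$ or $t\geq 2$; the tight corner $q=3$, $t=1$ is the only one that requires care, and I would expect to cover it by a short direct inspection of $C_1(\AG(n,3))$. For $q=2$ the estimate $2^t+1<2^{t+1}$ is immediate, and when $1<t<n-1$ one has $2^t+1<2^t+2^{t-1}$ since $t\geq 2$. Once these inequalities are in place, $q^t+1$ strictly separates the minimum and next-to-minimum weights of the ambient Reed-Muller code, is therefore absent from its weight spectrum, and so is absent from that of $C_t(\AG(n,q))$.
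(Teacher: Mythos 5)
Your reduction is exactly the one the paper intends: every codeword of $C_t(\AG(n,q))$ is a codeword of the ambient generalised Reed--Muller code $\mathcal{R}_q((n-t)(q-1),n)$ of the same weight, so it suffices to check that $q^t+1$ lies strictly between the minimum weight $q^t$ and the next-to-minimum weight quoted from Rolland ($q\ge 3$) and Kasami--Tokura ($q=2$); your arithmetic for $q\geq 4$, for $q=3,t\geq 2$, and for $q=2$ is correct. (One small slip: from ``subcode minimum $\ge$ ambient minimum'' and Result \ref{classic}(2) you only get that the ambient minimum is at most $q^t$; the equality is the classical GRM minimum-distance result, so nothing is lost.) The genuine gap is the corner you yourself flag, $(q,t)=(3,1)$, where $2(q-1)q^{t-1}=4=q^t+1$: you defer it to ``a short direct inspection of $C_1(\AG(n,3))$'', but that inspection cannot close the gap, because weight-$4$ codewords do exist in that code. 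Concretely, in a plane $\AG(2,3)\subseteq \AG(n,3)$ with coordinates $x,y$, the vector
\[
c \;=\; \mathrm{ind}(x+y=0)\;-\;\mathrm{ind}(x=0)\;-\;\mathrm{ind}(y=0)\;+\;\bigl(\mathrm{ind}(x=0)+\mathrm{ind}(x=1)+\mathrm{ind}(x=2)\bigr)
\]
is an $\F_3$-linear combination of incidence vectors of lines of $\AG(n,3)$ and coincides with the evaluation vector of the polynomial $xy$; its support is the four points with $x\neq 0\neq y$, so $\wt(c)=4=q^t+1$. This is consistent with Rolland's theorem ($4$ is precisely the second weight of $\mathcal{R}_3(2(n-1),n)$), and since $q=p$ here the line code is not a proper subfield subcode, so that second weight is attained inside $C_1(\AG(n,3))$ itself.

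So your proof is complete exactly when $q\ge 4$, or $t\ge 2$, or $q=2$, and the statement as printed is actually false for $(q,t)=(3,1)$, $n\ge 2$. For comparison: the paper gives no more of an argument than you do --- the corollary is asserted as an immediate consequence of the two quoted results --- and it silently overlooks the same corner case, so your more careful bookkeeping exposes a real defect in the corollary rather than merely in your write-up. Note that this matters downstream: Corollary \ref{gap} is invoked in the proof of Theorem \ref{main} for $C_n(\AG(2n+m,q))$ with $t=n$, so the case $n=1$, $q=3$ (translation generalised quadrangles of order $(3,3^m)$) needs a separate argument there; for the other parameters your argument, like the paper's, stands.
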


We end this section with a lemma concerning codewords of the dual code of $C_n(\PG(2n+m,q))$.
Two vectors $(v_1,\ldots,v_m)$ and $(w_1,\ldots,w_m)$ in $(\F_p)^m$ are said to be {\em orthogonal} if their dot product $(v,w)=v_1w_1+\cdots+v_mw_m$ is zero (in $\F_p$). The {\em dual code} of $C_t(\PG(n,q))$, denoted by $C_t(\PG(n,q))^\bot$, is the set of all vectors that are orthogonal to all codewords of $C_t(\PG(n,q))$. 
Observe that a vector $c$ is contained in $C_t(\PG(n,q))^\bot$ if and only if $(c,v)=0$ for all rows $v$ of $G$, where $G$ is the incidence matrix of points versus $t$-spaces of $\PG(n,q)$.

\begin{lemma}\label{lem:codeword_of_dual} Consider a hyperplane $H_\infty$ of $\PG(2n+m,q)$. Let $U$ and $T$ be two $(m+n-1)$-dimensional subspace of $H_\infty$, and $r\in \Pi\setminus H_\infty$. Let $a$ be the incidence vector of $\langle U,r\rangle$ and $b$ the incidence vector of $\langle T,r\rangle$. 
Then  $a-b$ is a codeword of $C_n(\PG(2n+m,q))^\bot$.
\end{lemma}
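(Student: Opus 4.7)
The plan is to verify directly that $a-b$ is orthogonal, over $\F_p$, to the incidence vector of every $n$-subspace $\pi$ of $\PG(2n+m,q)$, since such vectors span $C_n(\PG(2n+m,q))$ and this orthogonality is exactly the definition of the dual code. Writing $A=\langle U,r\rangle$ and $B=\langle T,r\rangle$, the required dot product is $|\pi\cap A|-|\pi\cap B|$ reduced modulo $p$, so the lemma reduces to the congruence $|\pi\cap A|\equiv|\pi\cap B|\pmod{p}$ for every $n$-subspace $\pi$.

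The first observation is a dimension check. Because $r\in\Pi\setminus H_\infty$ does not lie in $U$, the subspace $A$ has projective dimension $(m+n-1)+1=m+n$, and likewise $\dim B=m+n$. Then, for any $n$-dimensional subspace $\pi$ of $\Pi=\PG(2n+m,q)$, the Grassmann formula gives
$$\dim(\pi\cap A)\ge \dim\pi+\dim A-\dim \Pi = n+(m+n)-(2n+m)=0,$$
so $\pi\cap A$ is nonempty, and the same inequality shows that $\pi\cap B$ is nonempty as well.

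To conclude I would use the standard fact that the number of points of a nonempty $d$-dimensional projective subspace over $\F_q$ equals $1+q+\cdots+q^d\equiv 1\pmod{p}$ (since $q=p^h$). Applying this to the nonempty subspaces $\pi\cap A$ and $\pi\cap B$ shows that both sizes are congruent to $1$ modulo $p$, hence their difference vanishes. Thus $a-b$ annihilates every row of the incidence matrix of points versus $n$-subspaces, which proves $a-b\in C_n(\PG(2n+m,q))^\perp$. There is no real obstacle here; everything reduces to a dimension count, and in fact the same argument shows that the difference of the incidence vectors of \emph{any} two $(m+n)$-dimensional subspaces of $\PG(2n+m,q)$ lies in $C_n(\PG(2n+m,q))^\perp$, so nothing specific to the cone construction $\langle U,r\rangle$ is used beyond the dimension it produces.
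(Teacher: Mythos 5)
Your proof is correct and follows essentially the same route as the paper: you verify orthogonality against the incidence vector of each $n$-subspace $\pi$ by noting that $\pi$ meets each of the $(m+n)$-dimensional subspaces $\langle U,r\rangle$ and $\langle T,r\rangle$ in $1 \bmod p$ points, so the dot products cancel. The only difference is that you spell out the Grassmann dimension count guaranteeing the intersections are nonempty, a step the paper leaves implicit.
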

\begin{proof} Let $\pi$ be an $n$-dimensional subspace of $\PG(2n+m,q)$, then $\pi$ meets both $\langle U,r\rangle$ and $\langle T,r\rangle$ in $1\mod p$ points. Let $v$ be the incidence vector of $\pi$. It follows that $(v,a-b)=(v,a)-(v,b)= 1-1= 0$.
\end{proof}

\section{Linear codes from generalised quadrangles}
\subsection{Linear codes from generalised quadrangles embedded in an affine or projective space}
Let $\mathcal{G}$ be a generalised quadrangle. Consider the incidence matrix $N$ of points versus lines, where we index the rows by lines and the columns by points. The vector space generated by the rows of $G$ over the finite field $\F_p$ is called the {\em $p$-ary code of $\mathcal{G}$}.

The following theorem is a corollary of the results for codes from affine and projective spaces.
\begin{cor}\label{ingebed} Let $\mathcal{G}$ be a generalised quadrangle of order $(s,t)$ which is fully embedded in an affine of projective space of order $q$, $q=p^h$, $p$ prime. Then the minimum weight of the $p$-ary code $C(\mathcal{G})$ is $s+1$ and the codewords of minimum weight are precisely the scalar multiples of the incidence vectors of the lines of $\mathcal{G}$.
\end{cor}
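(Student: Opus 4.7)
The plan is to realise $C(\mathcal{G})$ as a subcode of the $p$-ary code of the ambient space and then transport the conclusion of Result~\ref{classic} across this inclusion.

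First, I would use the full embedding to fix the numerics: every line of $\mathcal{G}$ is a line of the ambient space all of whose ambient points lie in $\mathcal{G}$, and in particular $s+1 = q+1$ in the projective case and $s+1 = q$ in the affine case. Extending each codeword of $C(\mathcal{G})$ by $0$ on the coordinates of the ambient space that lie outside the point set of $\mathcal{G}$ then gives a weight-preserving $\F_p$-linear injection $C(\mathcal{G}) \hookrightarrow C_1(\PG(n,q))$ (resp.\ $C_1(\AG(n,q))$). By Result~\ref{classic} the minimum weight of this ambient code is exactly $s+1$, so $d(C(\mathcal{G})) \geq s+1$; the incidence vector of any line of $\mathcal{G}$ then provides a codeword of weight $s+1$, giving equality.

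For the characterisation of minimum-weight codewords, I would take $c \in C(\mathcal{G})$ of weight $s+1$ and let $\tilde c$ be its zero extension. Since $\tilde c$ is a minimum-weight codeword of the ambient code, the second half of Result~\ref{classic} forces $\tilde c = \lambda\, \chi_L$ for some line $L$ of the ambient space and some $\lambda \in \F_p^{\times}$, where $\chi_L$ denotes the incidence vector of $L$. As the support of $\tilde c$ is contained in the point set of $\mathcal{G}$, every ambient point of $L$ must be a point of $\mathcal{G}$.

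The main obstacle is the last step: to upgrade ``$L$ is an ambient line whose points happen to lie in $\mathcal{G}$'' to ``$L$ is a line of $\mathcal{G}$''. This is where the specific geometry of the embedded GQ enters. For a GQ embedded on a non-degenerate quadric or Hermitian variety, a line of the ambient space meeting the variety in all $s+1$ of its ambient points must be contained in the variety, and by the full embedding it is then a line of $\mathcal{G}$; an analogous elementary argument closes the affine case. Once this is in place, $c = \lambda\, \chi_L$ is precisely a scalar multiple of the incidence vector of a line of $\mathcal{G}$, as claimed.
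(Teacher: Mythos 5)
Your proposal follows the same route as the paper: view $C(\mathcal{G})$ as a subcode of the code of points and lines of the ambient space (via zero-extension) and invoke Result~\ref{classic}. The first half — the weight-preserving embedding, the bound $d(C(\mathcal{G}))\geq s+1$, and equality via the incidence vector of a line of $\mathcal{G}$ — is exactly the paper's argument, correctly filled in, and the reduction of a minimum-weight codeword to $\lambda\chi_L$ for an ambient line $L$ is fine as well.

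The genuine gap is in the step you yourself flag as the main obstacle. Your fix (a line all of whose points lie on the quadric or Hermitian variety is contained in it, hence is a line of $\mathcal{G}$) works only when the point set of the embedded quadrangle is a \emph{proper} subvariety of the ambient space, as for $Q(4,q)$, $Q(5,q)$, $H(3,q^2)$, $H(4,q^2)$. It says nothing precisely in the cases where the point set of $\mathcal{G}$ is the whole ambient space, and these are among the cases the corollary is meant to cover and is later applied to: for $W(3,q)$ the points are all points of $\PG(3,q)$ (the GQ lines are the totally isotropic lines, but a non-isotropic line also has all of its points in $\mathcal{G}$), and for the affine example $T_2^*(O)$ the points are all points of $\AG(3,q)$ (the GQ lines are the affine lines whose point at infinity lies on the hyperoval, but every other affine line also has all its points in $\mathcal{G}$). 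In these situations ``the support of $\tilde c$ is contained in the point set of $\mathcal{G}$'' is vacuous, so your argument does not exclude that the incidence vector of an ambient line that is \emph{not} a line of $\mathcal{G}$ lies in the span of the GQ-line vectors; ruling this out is a nontrivial statement (for the regular quadrangles $W(3,q)$ and $H(3,q^2)$ it is the theorem of Bagchi and Sastry cited in the paper), and the ``analogous elementary argument'' you invoke for the affine case does not exist in the form you describe. To be fair, the paper's own two-line proof leaves this step implicit as well, but as a self-contained proof of the statement as written your proposal is incomplete at exactly this point: either restrict to the quadrangles whose point set is a proper variety, or supply an independent argument (e.g.\ a dual-codeword argument in the spirit of Lemma~\ref{lem:codeword_of_dual}, or the Bagchi--Sastry result) for $W(3,q)$, $T_2^*(O)$ and similar embeddings.
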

\begin{proof} The code of the embedded generalised quadrangle is a subcode of the code of points and lines of  the ambient space. The corollary follows from Result \ref{classic}.
\end{proof}
\begin{remark} The above corollary shows that the generalised quadrangle $T_2^*(O)$ (which is not a translation generalised quadrangle), where $O$ is a hyperoval, has minimum weight $q-1$ and that all codewords of minimum weight are scalar multiples of incidence vectors of lines of $T_2^*(O)$. The dual of this code has been studied as an LDPC code; the minimum weight was determined in \cite{pepe}.
\end{remark}

\begin{remark}
The above corollary also deals with the classical generalised quadrangles: $W(3,q)$, $Q(4,q)$, $Q(5,q)$, $H(3,q^2)$ and $H(4,q^2)$. In \cite{bagchi}, Bagchi and Sastry derived this result for {\em all regular generalised polygons}. The known regular generalised quadrangles are precisely $W(3,q)$ and $H(3,q^2)$ (see also \cite[Section 3.3]{payne}).
\end{remark}
\subsection{Linear codes from translation generalised quadrangles}

The main theorem of this paper states that the result of Corollary \ref{ingebed} also holds for all translation generalised quadrangles.

In order to prove this, we will put the incidence matrix of $T(\cE)$ in a particular form:

Order the points of $T(\cE)$ such that the points of type $(i)$ come first, then the points of type $(ii)$ and finally the point $(\infty)$. Order the lines of $T(\cE)$ such that the lines of type $(a)$ come before the lines of type $(b)$.
With this ordering of the points and lines of $T(\cE)$ the incidence matrix $N$ has the form
\begin{displaymath}
N=\begin{bmatrix}
  A & B &{\bf 0}\\
  O & D &{\bf 1}\\
\end{bmatrix}
\end{displaymath}
where the matrix $A$ is the incidence matrix of the points of type $(i)$ with lines of type $(a)$; $B$ is the incidence matrix of points of type $(ii)$ with lines of type $(a)$; $O$ is the all zero matrix (a point of type $(i)$ and a line of type $(b)$ are never incident with each other); $D$ is the incidence matrix of points of type $(ii)$ and lines of type $(b)$; and the last column consists of the all-zero column concatenated with the all-one column. Note that $N$ has the following properties:\\
(1) each row of $A$ has weight $q^n$;\\
(2) each row of $B$ has weight $1$;\\
(3) each row of $D$ has weight $q^n$ and each two rows of $D$ have disjoint supports.\\

\begin{theorem}\label{thm:min_wt}
The minimum distance of the $p$-ary code of points and lines of a translation generalised quadrangle of order $(s,t)$, with $s$ a power of the prime $p$, is $s+1$.
\end{theorem}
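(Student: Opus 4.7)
Since any line of $T(\cE)$ contains $s+1=q^n+1$ points, the incidence vector of each line is a codeword of weight $s+1$, giving the upper bound. For the lower bound, let $c=\sum_L \alpha_L \chi_L + \sum_E \beta_E \chi_E$ be a non-zero codeword, with $L$ ranging over lines of type (a) and $E$ over the lines of type (b). Restrict $c$ to the $q^{2n+m}$ points of type (i): the lines of type (b) lie in $H_\infty$ and contribute zero, while each line of type (a) contributes the characteristic vector of an $n$-flat of $\AG(2n+m,q)$. Hence the restriction $c^{(1)}$ is a codeword of $C_n(\AG(2n+m,q))$, and by Result~\ref{classic}(2) and Corollary~\ref{gap}, $\wt(c^{(1)})\in\{0,\,q^n\}\cup\{w:w\ge q^n+2\}$.

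If $\wt(c^{(1)})\ge q^n+2$, we are immediately done. If $\wt(c^{(1)})=q^n$, then by Result~\ref{classic}(2) we have $c^{(1)}=\lambda\chi_F$ for some $n$-flat $F$ of $\AG(2n+m,q)$ and some $\lambda\in\F_p^*$. I would first show that the direction of $F$ must be an element of $\cE$, by analysing the subcode of $C_n(\AG(2n+m,q))$ spanned by the $\chi_{L\setminus E_L}$ over lines $L$ of type (a): a non-egg direction leads to a contradiction with the structure of these generators. Then $F=L_0\setminus E_{L_0}$ for a unique type-(a) line $L_0$, and $c':=c-\lambda\chi_{L_0}$ is a codeword vanishing on type (i). Under $\wt(c)\le q^n$ the codeword $c$ has no support outside type (i), so $c'$ has weight exactly one, supported only at $P_{L_0}=\la L_0,T_{E_{L_0}}\ra$ with value $-\lambda\ne 0$. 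A weight-one codeword is easily ruled out (for instance by exhibiting a dual codeword that does not vanish at $P_{L_0}$), giving a contradiction.

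The case $c^{(1)}=0$ is the crux. Now $c$ is supported entirely on the type-(ii) points and $(\infty)$, and $\sum_L\alpha_L\chi_{L\setminus E_L}=0$ on the affine points. My plan is to exploit Lemma~\ref{lem:codeword_of_dual}: for all $E,E'\in\cE$ and $r\in\Pi\setminus H_\infty$, the vector $\chi_{\la T_E,r\ra}-\chi_{\la T_{E'},r\ra}$ lies in $C_n(\PG(2n+m,q))^\perp$. Pairing this orthogonality relation with $\chi_L$ (for $L$ a line of type (a), hence an $n$-space of $\Pi$) yields identities among the sums $\sum_{L\subseteq\la T_E,r\ra}\alpha_L$ as $E$ varies. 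Combined with the condition $c(P)=0$ at every type-(ii) point $P$, these identities force all the $\beta_E$'s to coincide; together with $\sum_E\beta_E=0$ at $(\infty)$ and $|\cE|=q^m+1\equiv 1\pmod p$, this gives $\beta_E=0$ for every $E$, and the same relations then force the $\alpha_L$-contributions to vanish at every type-(ii) point. Hence $c=0$, contradicting $c\ne 0$.

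The main obstacle is orchestrating this argument in the last case: the combinatorial interplay between Lemma~\ref{lem:codeword_of_dual}, the partition of the $(q^m+1)q^n$ type-(ii) points into the $q^m+1$ lines of type (b), and the tangent-space structure of the egg requires careful bookkeeping. Case (I) is essentially routine once the claim about $F$'s direction belonging to $\cE$ has been established.
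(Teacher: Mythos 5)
Your overall skeleton (restrict to the affine points, invoke Result \ref{classic}(2), and split according to $\wt(c^{(1)})\in\{0,q^n\}$ or $\ge q^n+1$) is the same as the paper's, but both of the nontrivial branches have genuine gaps. In the branch $\wt(c^{(1)})=q^n$, your pivotal claim -- that the direction of $F$ must lie in $\cE$, provable ``by analysing the subcode of $C_n(\AG(2n+m,q))$ spanned by the $\chi_{L\setminus E_L}$'' -- is false at the level of that subcode. For example, take $q=2$ and $n=1$ (so $T(\cE)\cong Q(4,2)$ or $Q(5,2)$, with $\cE$ an oval of $\PG(2,2)$ or an ovoid of $\PG(3,2)$): for two distinct egg points $d_1,d_2$ and an affine point $x$ one has $\chi_{\{x,x+d_1\}}+\chi_{\{x+d_1,x+d_1+d_2\}}=\chi_{\{x,x+d_1+d_2\}}$, a minimum-weight word of the subcode whose direction $d_1+d_2$ is not an egg point (no three egg points are collinear). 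So the egg-direction property cannot be read off the generators; it can only be extracted by also using that $c$ vanishes at all type (ii) points (which does follow from $\wt(c)\le q^n=\wt(c^{(1)})$). Once you use that, the efficient conclusion is the paper's: the coefficients on each egg element sum to zero, so the projective completion $\sum_L\alpha_L\chi_{\bar L}$ would be a codeword of $C_n(\PG(2n+m,q))$ of weight $q^n$, contradicting Result \ref{classic}(1); i.e.\ this branch is outright impossible, and there is no line $L_0$ to subtract. (Your elimination of a weight-one codeword is fine in principle, e.g.\ via the dual vector $\chi_{x^\perp}-\chi_{y^\perp}$, but it becomes moot.)

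The case $c^{(1)}=0$, which you yourself call the crux, is not proved but assumed: you invoke ``the condition $c(P)=0$ at every type-(ii) point $P$'' and ``$\sum_E\beta_E=0$ at $(\infty)$''. Neither is a hypothesis. Given $c^{(1)}=0$, the entry of $c$ at a type (ii) point $P\supset T_{E_j}$ equals $\beta_{E_j}+\sum_{L\subset P}\alpha_L$ and the entry at $(\infty)$ equals $\sum_E\beta_E$, so asserting that all these vanish is exactly the statement $c=0$ you are trying to reach; all you actually have is $\wt(c)\le q^n$, i.e.\ at most $q^n$ nonzero entries spread over the $(q^m+1)q^n+1$ remaining positions. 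Converting that weight bound into vanishing is where the work lies: the paper uses that the $q^m+1$ disjoint blocks of type (ii) points (the supports of the rows of $D$) outnumber $q^n$, so the set $S$ of egg elements $E_j$ with $a_j:=\sum_{L:\,E_L=E_j}\alpha_L\neq 0$ is a proper subset of $\cE$, and then pairs $\sum_L\alpha_L\chi_{\bar L}$ with a dual word $\chi_{\la T_E,r\ra}-\chi_{\la T_F,r\ra}$, $E\in S$, $F\notin S$, from Lemma \ref{lem:codeword_of_dual} to kill the $\alpha$-part, after which property (3) of $N$ disposes of the $\beta$-part. Note also that your proposed pairings only constrain the sums $a_j$ (they are forced to be equal), not the $\beta_E$'s, so ``these identities force all the $\beta_E$'s to coincide'' does not follow as stated. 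As written, the proposal therefore does not establish the lower bound in either of the two essential cases.
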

\begin{proof}  Recall that a translation generalised quadrangle has order $(s,t)=(q^n,q^m)$ for some $n,m$ and $q=p^h$, $p$ prime.
Let $c$ be a codeword in $C(T(\cE))$, say 
\begin{eqnarray}\label{eqn:c}
c=\sum \lambda_i u_i + \sum \mu_i v_i, \quad \quad \lambda_i,\mu_i\in \bF_p
\end{eqnarray}
where the $u_i$'s are incidence vectors of lines $m_i$ of type $(a)$ and the $v_i$'s are incidence vectors of lines $\ell_i$ of type $(b)$ of $T(\cE)$. Suppose $c$ has weight $\leq q^n$. 

{\bf $(i)$} All $\mu_i$'s are zero.

Let $\hat{c}$ denote the codeword in the code $C_n(\AG(2n+m,q))$ of points and $n$-dimensional subspaces of $\AG(2n+m,q)$, consisting of the entries of $c$ in the positions corresponding to the points of type $(i)$ of $T(\cE)$. Then $\hat{c}=\sum \lambda \hat{u}_i$ where the $\hat{u}_i$'s are incidence vectors of $n$-dimensional subspaces $\hat{m}_i$ of $\AG(2n+m,q)$.

If $\wt(\hat{c})>0$ then $\wt(c)=\wt(\hat{c})=q^n$ since the minimum distance of the code $C_n(\AG(2n+m,q))$ is $q^n$ (see Result \ref{classic}(2)). Moreover, by the characterisation of codewords of minimum weight in $C_n(\AG(2n+m,q))$, $\hat{c}$ is a scalar multiple of the incidence vector $v_\pi$ of an $n$-dimensional subspace $\pi$ of $\AG(2n+m,q)$. Since $wt(c)=q^n$,

(*)  for each point $x$ of type $(ii)$, the restriction of the sum $\sum \lambda_i u_i$ to the sum over those $i$ for which $m_i$ is a line of $T(\cE)$ through $x$, is $0$ mod $p$.

Let $\bar{u}_i$ denote the incidence vector of the projective completion $\bar{m}_i$ of the affine subspace $\hat{m}_i$.
By property (*) the sum $\sum \lambda_i \bar{u}_i$ is a codeword, in the code $C_n(\Pi)$ of the points and $n$-dimensional subspaces of the projective space $\Pi=\PG(2n+m,q)$, of weight $q^n$, a contradiction (see Theorem \ref{classic}(1)).

So $\wt(\hat{c})=0$. With the same notation as above, it follows that the linear combination $\bar{c}=\sum \lambda_i\bar{u}_i$ is a codeword in  $C_n(\Pi)$ which is a linear combination of the incidence vectors of a subset $S$ of $\cE$. Since $wt(c)\leq q^n$ there are at most $q^n$ elements in $S$ (ignore $(n-1)$-dimensional subspaces whose incidence vector has a zero coefficient in $\sum \lambda_i \bar{u}_i$). 
Let $E\in S$ and $F\in \cE\setminus S$, and $r\in \Pi\setminus H_\infty$. Let $a$ be the incidence vector of $\langle T_E,r\rangle$ and $b$ the incidence vector of $\langle T_F,r\rangle$. Then, by Lemma \ref{lem:codeword_of_dual}, $a-b$ is a codeword of the dual code of $C_n(\Pi)$. This contradicts $\bar{c}\in C_n(\Pi)$ since by construction the dot product in $\bF_p$ of $\bar{c}$ with $a$ is zero, while the dot product of $\bar{c}$ with $b$ is nonzero.

$(ii)$ All $\lambda_i$'s are zero. 

Since the rows of $N$ containing a row of $D$ have weight $q^n+1$, this implies that there is a linear combination of the rows of $D$, with at least two nonzero coefficients which results in a codeword of weight $\leq q^n$, contradicting property (3) of $N$.

$(iii)$ Finally assume that at least one $\lambda_i$ and at least one $\mu_i$ is nonzero.

If $\wt(\hat{c})>0$ then as before $\wt(\hat c)=q^n$, contradicting the fact that at least one of the $\mu_i$'s is nonzero. 
Hence $\wt(\hat{c})$ must be zero.
Let $\cE=\{E_0,\ldots,E_{q^m}\}$. For each $j\in \{0,\ldots,q^m\}$ denote by $\alpha_j\in \bF_p$ the coefficient of the incidence vector of $E_j\in \cE$ in the linear combination $\sum\lambda_i\bar{u}_i$. 
As in case $(i)$, let $S\subseteq \cE$ denote the set of egg elements $E_j$ for which $\alpha_j\neq 0$.
If $S=\cE$ then $\wt(\sum\lambda_i u_i)\geq q^m+1$ and $c$ has at least one nonzero entry in each of the sets of columns corresponding to the partition defined by the supports of the rows of $D$ (cf. property (3) of $N$).
In order for the codeword $c$ to have weight $\leq q^n$, for at least one such set $T$ of columns of $N$ the entries in $\sum\lambda_i\bar{u}_i$ in the positions corresponding to the columns in $T$ must be a nonzero constant, since otherwise (again using property (3) of $N$) no linear combination of the rows of $D$ can make that part of the codeword zero. This means that the coefficient $\alpha_j=0$ (it is a multiple of $q^n$), where $E_j$ is the element of $\cE$ corresponding to $T$.  This contradicts $S=\cE$.
It follows that $S$ is a proper subset of $\cE$ and we can apply Lemma \ref{lem:codeword_of_dual} and the same argument as in $(i)$ using a codeword  $a-b$ in the dual of the code $C_n(\Pi)$ to obtain a contradiction.

This shows that the minimum weight of $C(T(\cE))$ is at least $q^n+1$. Since each row of $N$ has weight $q^n+1$, the statement follows.
\end{proof}

\begin{theorem}\label{main}
The minimum weight codewords of the $p$-ary code of points and lines of a translation generalised quadrangle $T(\cE)$ of order $(s,t)$, with $s$ a power of the prime $p$, are the incidence vectors of lines of $T(\cE)$.
\end{theorem}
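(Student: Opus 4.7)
The plan is to refine the case analysis from the proof of Theorem~\ref{thm:min_wt} so that each case either identifies $c$ as a scalar multiple of a line incidence vector or yields a contradiction. Let $\hat c$ denote the restriction of $c$ to the type~$(i)$ points, let $\bar c=\sum\lambda_i\bar u_i\in C_n(\Pi)$ be the associated projective completion, and, for each $E\in\cE$, let $C_E$ denote the set of type~$(ii)$ points lying on the type~$(b)$ line $E$ and set $\alpha_E=\sum_{y\in C_E}c_y$. As observed in the proof of Theorem~\ref{thm:min_wt}, $\alpha_E$ equals the value of $\bar c$ on any point of the egg element $E$. Since $\hat c\in C_n(\AG(2n+m,q))$ and $\wt(\hat c)\le\wt(c)=q^n+1$, Result~\ref{classic}(2) together with Corollary~\ref{gap} forces $\wt(\hat c)\in\{0,q^n\}$.

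First assume $\wt(\hat c)=q^n$. Then $\hat c=\lambda v_\pi$ for an affine $n$-flat $\pi$ and some $\lambda\in\F_p^*$ by Result~\ref{classic}(2), and the residual weight is exactly $1$. If $c_\infty\neq 0$ while $\tilde c=0$, every $\alpha_E$ vanishes, so $\bar c=\hat c$ is a codeword of $C_n(\Pi)$ of weight $q^n<(q^{n+1}-1)/(q-1)$, contradicting Result~\ref{classic}(1). Otherwise $c_\infty=0$ and there is a unique type~$(ii)$ point $y_0$ with $c_{y_0}\ne 0$; then $\bar c$ has weight exactly $(q^{n+1}-1)/(q-1)$, and by Result~\ref{classic}(1) it equals $\lambda v_{\bar\pi}$ for an $n$-dimensional projective subspace $\bar\pi$ with support $\pi\cup E_0$, where $E_0$ is the egg element with $y_0\in C_{E_0}$. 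Hence $\bar\pi$ is the projective closure of $\pi$ and meets $H_\infty$ in $E_0$, so $\pi$ is the affine part of a type~$(a)$ line $\ell_\pi$ of $T(\cE)$ and $c_{y_0}=\lambda$. Subtracting, $c-\lambda v_{\ell_\pi}$ is a codeword of $C(T(\cE))$ whose support lies in at most two type~$(ii)$ points; its weight is at most $2<q^n+1$, so by Theorem~\ref{thm:min_wt} it must vanish, giving $c=\lambda v_{\ell_\pi}$.

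Now assume $\wt(\hat c)=0$. Lemma~\ref{lem:codeword_of_dual}, applied to pairs of tangent spaces, forces all $\alpha_E$ to share a common value $\alpha\in\F_p$. If $\alpha\neq 0$, each of the $q^m+1$ classes contributes at least one nonzero entry to $\tilde c$; combining $\wt(\tilde c)\ge q^m+1$ with the weight budget $q^n+1$ and the inequality $m\ge n$ (which holds for every known egg) forces $m=n$, $c_\infty=0$, and exactly one nonzero entry equal to $\alpha$ in each class. If $\alpha=0$, every class-sum vanishes, so each class contributes either zero or at least two nonzero entries. In the subcase where $\tilde c$ is concentrated in a single class $C_{E_0}$, necessarily $c_\infty\ne 0$ (otherwise the support would fit in $q^n<q^n+1$ positions), and $c-c_\infty v_{E_0}$ is a codeword supported in $C_{E_0}$ of weight at most $q^n$; it must vanish by Theorem~\ref{thm:min_wt}, yielding $c=c_\infty v_{E_0}$, a scalar multiple of the type~$(b)$ line incidence vector associated to $E_0$.

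The main obstacle will be ruling out the remaining configurations in the $\wt(\hat c)=0$ case: those where $\tilde c$ spreads over at least two egg classes. Since these support patterns do not match a scalar multiple of a line incidence vector, a contradiction is needed. The strategy I plan to pursue is to construct an explicit $d\in C(T(\cE))^\bot$ with $(c,d)\neq 0$, by taking a function $d_1$ on the affine points for which $\sum_{x\in\hat m}d_1(x)$ depends only on the type~$(ii)$ point $y$ containing the type~$(a)$ line $m$, setting $d_{2,y}$ equal to the negative of this common value, and choosing a constant $d_3$ with $\sum_{y\in C_E}d_{2,y}+d_3=0$ for every $E\in\cE$. Producing such a $d_1$ tailored to every bad support pattern will use the geometric structure of the egg $\cE$ beyond what Lemma~\ref{lem:codeword_of_dual} provides directly, and is the main technical point of the argument.
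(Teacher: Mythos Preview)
Your proposal is incomplete: you correctly set up the case analysis and dispose of the cases $\wt(\hat c)=q^n$ and the single-class subcase of $\wt(\hat c)=0$, but you explicitly leave the ``remaining configurations'' (where $\wt(\hat c)=0$ and $\tilde c$ meets at least two egg classes) unresolved, offering only an outline of a strategy in the final paragraph.

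For the subcase $\alpha\neq 0$ (which, as you show, forces $m=n$), the missing step is already available from Lemma~\ref{lem:codeword_of_dual}, just with a different choice of the second subspace. Take $a=v_{\langle T_E,r\rangle}$ and $b=v_{\langle E,F,r\rangle}$ for two distinct egg elements $E,F$. When $m=n$ both $T_E$ and $\langle E,F\rangle$ are $(2n-1)$-dimensional subspaces of $H_\infty$, so the lemma applies; the egg axiom that any three elements span a $(3n-1)$-space forces $\langle E,F\rangle$ to meet the egg precisely in $E\cup F$. Hence $(\bar c,a)\equiv\alpha$ and $(\bar c,b)\equiv 2\alpha\pmod p$, so $(\bar c,a-b)=-\alpha\neq 0$, the required contradiction. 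This is exactly the device the paper uses; your pairing of two tangent spaces only yields $\alpha_E-\alpha_F=0$, which is why you stalled here.

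For the subcase $\alpha=0$ with $\tilde c$ spread over at least two classes, your proposed construction of a dual codeword $d\in C(T(\cE))^\bot$ is too schematic to assess: you give no candidate for $d_1$, and the requirement that $\sum_{x\in\hat m}d_1(x)$ depend only on the type~$(ii)$ point of $m$ is a strong constraint with no obvious nontrivial solutions beyond those already implicit in Lemma~\ref{lem:codeword_of_dual}. The paper does not attempt a representation-free argument in this region; it reverts to the decomposition $c=\sum\lambda_iu_i+\sum\mu_jv_j$ and splits on whether all $\mu_j$ vanish, all $\lambda_i$ vanish, or neither, applying the $\langle E,F,r\rangle$ device (for $n=m$) or the tangent-space device (for $n<m$) in the first and third of these. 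You should either produce an explicit $d$ that works, or abandon the representation-free organisation for this part and follow the paper's case split.
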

\begin{proof}

We will use the same notation as in the proof of Theorem \ref{thm:min_wt}. Let $c$ be a codeword of $C(T(\cE))$ of weight $q^n+1$ as in (\ref{eqn:c}). 

{\bf $(i)$} All $\mu_i$'s are zero.

If $\wt(\hat{c})>0$ then the weight of $\hat c$ must be $q^n$, since the code $C_n(\AG(2n+m,q))$ does not contain any codewords of weight $q^n+1$ by Corollary \ref{gap}. As in the proof of Theorem \ref{thm:min_wt}, the codeword $\hat{c}$ is a scalar multiple of the incidence vector $v_\pi$ of an $n$-dimensional subspace $\pi$ of $\AG(2n+m,q)$, say $\hat{c}=\lambda v_\pi$. Also, the codeword $c$ has exactly one nonzero entry indexed by a point $x_0$ of type $(ii)$ of $T(\cE)$, equivalently,

(**) the restriction of the sum $\sum \lambda_i u_i$ to the sum over those $i$ for which $m_i$ is a line of $T(\cE)$ through a point $x$ of type $(ii)$, is $0$ mod $p$ for $x\neq x_0$ and nonzero mod $p$ for $x=x_0$.

Consider the projective completion of the subspaces $\hat{m}_i$ and define $w$ by setting $\lambda w=\sum\lambda_i \bar{u}_i$. Then $w$ is a codeword of the code $C_n(\Pi)$. The affine part of the support of $w$ coincides with $\pi$, and by (**) the part of the support of $w$ in $H_\infty$ coincides with $E_0\in \cE$, which is the unique line of type $(b)$ incident with the point $x_0$. Hence $\lambda w$ is a codeword of $C_n(\Pi)$ of minimum weight and must therefore be the scalar multiple of an $n$-dimensional subspace of $\Pi$ (see Result \ref{classic}(1)). It follows that $E_0$ is contained in the projective completion of $\pi$ and that $c$ is the scalar multiple of the incidence vector of a line of type $(a)$ in $T(\cE)$.

If $\wt(\hat{c})=0$ then, as in part $(i)$ of the proof of Theorem \ref{thm:min_wt}, $\bar c$ is a codeword of $C_n(\Pi)$ which is the linear combination of a subset $S$ of $\cE$. 

If $n<m$ then the same argument as in part $(i)$ of the proof of Theorem \ref{thm:min_wt} applies to obtain a contradiction, by using a codeword  $a-b$ in the dual of the code $C_n(\Pi)$ as in Lemma \ref{lem:codeword_of_dual}. 

If $n=m$ then consider two elements $E,F\in \cE$ and $r$ a point of $\Pi\setminus H_\infty$. Let $a$ be the incidence vector of $\langle T_E,r\rangle$ and $b$ the incidence vector of $\langle E,F,r\rangle$. Then again, by Lemma \ref{lem:codeword_of_dual}, $a-b$ is a codeword in the dual code of $C_n(\Pi)$, contradicting the fact that $\bar{c}$ is a codeword of $C(T(\cE))$.

$(ii)$ All $\lambda_i$'s are zero. 

In this case it easily follows from property (3) of the incidence matrix $N$ that $c$ must be the incidence vector of a line of type $(b)$ of $T(\cE)$.

$(iii)$ At least one $\lambda_i$ and at least one $\mu_i$ is nonzero. 

If $\wt(\hat{c})\neq 0$ then $\wt(\hat{c})\geq q^n$ and $\hat{c}$ is the incidence vector of an $n$-dimensional subspace $\hat m$. Let $a$ be the codeword of $C(T(\cE))$ corresponding to the line of type $(a)$ defined by $\hat m$. Then $a-c\in C(T(\cE))$  has weight at most 2, contradicting Theorem \ref{thm:min_wt}.
Hence $\wt(\hat{c})= 0$.

As before, let $\alpha_j\in \bF_p$ denote the coefficient of the incidence vector of $E_j\in \cE$ in the linear combination $\sum\lambda_i\bar{u}_i$, and put $S$ equal to the set of egg elements $E_j$ for which $\alpha_j\neq 0$.

If $n=m$ then consider two elements $E,F\in \cE$ and $r$ a point of $\Pi\setminus H_\infty$. Let $a$ be the incidence vector of $\langle T_E,r\rangle$ and $b$ the incidence vector of $\langle E,F,r\rangle$. Then $a-b$ is a codeword in the dual code of $C_n(\Pi)$, see Lemma \ref{lem:codeword_of_dual}, contradicting the fact that $\bar{c}$ is a codeword of $C(T(\cE))$.

If $n<m$ and $S=\cE$ then $\wt(\sum\lambda_i u_i)\geq q^m+1$ and $c$ has at least one nonzero entry in each of the sets of columns corresponding to the partition defined by the supports of the rows of $D$ (cf. property (3) of $N$).
In order for the codeword $c$ to have weight $q^n+1$, for at least one such set $T$ of columns of $N$ the entries in $\sum\lambda_i\bar{u}_i$ in the positions corresponding to the columns in $T$ must be a nonzero constant, since otherwise (again using property (3) of $N$) no linear combination of the rows of $D$ can make that part of the codeword zero. This means that the coefficient $\alpha_j=0$, where $E_j$ is the element of $\cE$ corresponding to $T$.  This contradicts $S=\cE$.

It follows that $S$ is a proper subset of $\cE$ and we can apply the same argument as in part $(i)$ of the proof of Theorem \ref{thm:min_wt}, using a codeword  $a-b$ in the dual of the code $C_n(\Pi)$, to obtain a final contradiction.
\end{proof}

\section{Locally repairable codes}

In \cite{PaHoOg2013}, the authors study locally repairable codes (LRC's) arising from partial geometries.
We use the following definitions from \cite{PaHoOg2013}. Let $C$ be a code. For every position $i$, the set $\Omega(i)$ is defined to be the set of all parity-check vectors {\em repairing} the $i$-th symbol, i.e. $$\Omega(i)=\{v \in C^\bot: v_i\neq 0\}.$$
\begin{definition} The {\em repair degree} for the $i$-th symbol is $r(i)=min\{wt(v)-1:v\in \Omega(i)\}$ and the overall repair degree $r$ of a linear code of lenght $m$ is its maximum repair degree:
$$r=max_{1\leq i \leq m}\{r(i)\}.$$

\end{definition}
\begin{definition} Let $\Omega_r(i)=\{v\in \Omega_i: wt(v)\leq r+1\}$. The {\em repair availability of $i$} is then $a(i) = |\Omega_r(i)|$, and the overall {\em repair availability of the code} is $$a = min_{1\leq i \leq m}\{a(i)\}.$$
\end{definition}

It was shown in \cite[Theorem 1 and Remark 1]{PaHoOg2013} that the maximum rate of a balanced locally repairable binary code from a partial geometry is achieved when the partial geometry is a generalised quadrangle.

As before, consider the incidence matrix $N$ of points and lines of a partial geometry where rows represent lines and columns represent points. In \cite{PaHoOg2013}, the authors define a {\em $pg-BLRC$-code} as the {\em dual code} of the {\em binary} code generated by $N$.

\begin{result}\cite[Lemma 1]{PaHoOg2013}\label{lrcbound}The repair degree $r$ of binary pg-BLRC's $C$ of a partial geometry with parameters $(s,t,\alpha)$ and its repair availability $a$ satisfies $r\leq s$ and $a\geq t+1$.
\end{result}
Using the results of this paper we can derive the exact values of $r$ and $a$ in the case that the partial geometry is a classical or translation generalised quadrangle or a generalised quadrangle $T_2^*(O)$, $O$ a hyperoval.

\begin{theorem} \label{main2} Let $\mathcal{G}$ be one of the following generalised quadrangles:
a classical generalised quadrangle, i.e. $W(3,q)$, $Q(4,q)$, $Q(5,q)$, $H(3,q^2)$, or $H(4,q^2)$;
$T_2^*(O)$, where $O$ is a hyperoval in $\PG(2,q)$, $q$ even; or
a translation generalised quadrangle of order $(q^n,q^m)$.
The dual code $C$ of the $p$-ary code of $\mathcal{G}$, where $q=p^h$, $p$ prime has repair degree $r=s$ and  repair availability $a=(p-1)(t+1)$. 
\end{theorem}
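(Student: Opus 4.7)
The plan is to translate the statement into the language of the $p$-ary code of $\mathcal{G}$ and then invoke the minimum-weight characterisations proved earlier in the paper. Since by definition $C = C(\mathcal{G})^\perp$, we have $C^\perp = C(\mathcal{G})$, so $\Omega(i)$ is precisely the set of codewords of $C(\mathcal{G})$ whose $i$-th coordinate is nonzero. Consequently, if $P_i$ denotes the point corresponding to position $i$, then $r(i)+1$ equals the minimum weight among codewords of $C(\mathcal{G})$ whose support contains $P_i$, and $\Omega_r(i)$ is the set of such codewords of weight at most $r+1$. For each family in the statement, the minimum weight of $C(\mathcal{G})$ equals $s+1$ \emph{and} the minimum-weight codewords are exactly the nonzero scalar multiples of incidence vectors of lines of $\mathcal{G}$: this is Corollary \ref{ingebed} (together with the remark following it) for $W(3,q)$, $Q(4,q)$, $Q(5,q)$, $H(3,q^2)$, $H(4,q^2)$ and $T_2^*(O)$, and Theorem \ref{main} (combined with Theorem \ref{thm:min_wt}) for translation generalised quadrangles of order $(q^n,q^m)$.

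From this the computation of the repair degree is immediate. Every point of $\mathcal{G}$ lies on $t+1$ lines, so the incidence vector of any such line through $P_i$ is a codeword of weight $s+1$ supported on $P_i$, giving $r(i) \le s$. Conversely $r(i) \ge s$ because $s+1$ is the global minimum weight of $C(\mathcal{G})$. Hence $r(i) = s$ for every $i$, and $r = s$, which saturates the bound of Result \ref{lrcbound}.

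For the repair availability, an element of $\Omega_r(i)$ is a codeword $v \in C(\mathcal{G})$ with $v_i \ne 0$ and $\wt(v) \le r+1 = s+1$, so in fact $\wt(v) = s+1$ since $s+1$ is the minimum weight. By the characterisation of minimum-weight codewords, such $v$ are in bijection with pairs $(L, \lambda)$ where $L$ is one of the $t+1$ lines through $P_i$ and $\lambda \in \bF_p \setminus \{0\}$, whence $a(i) = (p-1)(t+1)$. Since this count does not depend on $i$, we obtain $a = (p-1)(t+1)$. There is no real obstacle in the argument: the work is entirely contained in the earlier minimum-weight results, and what is new here is only the uniform bookkeeping across the three listed families.
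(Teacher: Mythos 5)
Your proposal is correct and follows essentially the same route as the paper: identify $C^\perp$ with the $p$-ary code of $\mathcal{G}$, deduce $r(i)=s$ from the minimum weight $s+1$ established in Corollary \ref{ingebed} and Theorem \ref{main}, and count the elements of $\Omega_r(i)$ as the $(p-1)(t+1)$ nonzero scalar multiples of the incidence vectors of the $t+1$ lines through $P_i$. The only difference is cosmetic: you make explicit the upper bound $r(i)\le s$ coming from the existence of a line through each point, a step the paper leaves implicit.
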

\begin{proof} We have that $C^\bot$ is the $p$-ary code of the  generalised quadrangle, and we have shown in Corollary \ref{ingebed} and Theorem \ref{main} that the minimum weight of $C^\bot$ is $s+1$. This implies that $r(i)=s$ for all $i$, and hence, $r=s$.
Now $\Omega_i$ consists of the set of all codewords of weight $s+1$ through the point $P_i$ corresponding to $i$-th column, which is, again by Corollary \ref{ingebed} and Theorem \ref{main}, the set of scalar multiples of incidence vectors of the lines through $P_i$. There are $t+1$ lines through $P_i$, each giving rise to $(p-1)$ distinct codewords. Hence $a(i)=(p-1)(t+1)$, for all $i$, and therefore $a=(p-1)(t+1)$.
\end{proof}
\begin{remark} Theorem \ref{main2} shows that the bounds of Result \ref{lrcbound} are sharp for the binary codes of classical and translation generalised quadrangles of even order and for $T_2^*(O)$, $O$ a hyperoval.
\end{remark}



\end{document}